\numberwithin{equation}{section}
\numberwithin{figure}{section}
\theoremstyle{plain}
\newtheorem{thm}{\protect\theoremname}
  \theoremstyle{plain}
  \newtheorem{cor}[thm]{\protect\corollaryname}
  \theoremstyle{remark}
  \newtheorem{rem}[thm]{\protect\remarkname}
  \theoremstyle{plain}
  \newtheorem{lem}[thm]{\protect\lemmaname}
  \theoremstyle{plain}
  \newtheorem{prop}[thm]{\protect\propositionname}
   \providecommand{\fg}{\ifdim\lastskip>\z@\unskip\fi~\frqq}%
  \providecommand{\corollaryname}{Corollary}
  \providecommand{\lemmaname}{Lemma}
  \providecommand{\propositionname}{Proposition}
  \providecommand{\remarkname}{Remark}
\providecommand{\theoremname}{Theorem}
\begin{document}

\newcommand{\Gal}{{\rm Gal}} 
\newcommand{\Pic}{{\rm Pic}}

\addtolength{\textwidth}{12mm}
\addtolength{\hoffset}{-7mm}
\addtolength{\textheight}{25mm}
\addtolength{\voffset}{-7mm}

%\subjclass{Primary: 14J29 ; Secondary: 14G10, 14G15}

\global\long\global\long\def\Alb{{\rm Alb}}
 \global\long\global\long\def\Jac{{\rm Jac}}
 \global\long\global\long\def\Hom{{\rm Hom}}
 \global\long\global\long\def\End{{\rm End}}
 \global\long\global\long\def\aut{{\rm Aut}}
 \global\long\global\long\def\NS{{\rm NS}}
 \global\long\global\long\def\SSm{{\rm S}}
 \global\long\global\long\def\psl{{\rm PSL}}
 \global\long\global\long\def\CC{\mathbb{C}}
 \global\long\global\long\def\BB{\mathbb{B}}
 \global\long\global\long\def\PP{\mathbb{P}}
 \global\long\global\long\def\QQ{\mathbb{Q}}
 \global\long\global\long\def\RR{\mathbb{R}}
 \global\long\global\long\def\FF{\mathbb{F}}
 \global\long\global\long\def\DD{\mathbb{D}}
 \global\long\global\long\def\NN{\mathbb{N}}
 \global\long\global\long\def\ZZ{\mathbb{Z}}
 \global\long\global\long\def\HH{\mathbb{H}}
 \global\long\global\long\def\gal{{\rm Gal}}
 \global\long\global\long\def\OO{\mathcal{O}}
 \global\long\global\long\def\pP{\mathfrak{p}}
 \global\long\global\long\def\pPP{\mathfrak{P}}
 \global\long\global\long\def\qQ{\mathfrak{q}}
 \global\long\global\long\def\mm{\mathcal{M}}
 \global\long\global\long\def\aaa{\mathfrak{a}}

\title{The zeta functions of the Fano surfaces of cubic threefolds}

\author{Xavier Roulleau}
\begin{abstract}
We give an algorithm to compute the zeta function of the Fano surface
of lines of a smooth cubic threefold $F\hookrightarrow\mathbb{P}^{4}$
defined over a finite field. We obtain some examples of Fano surfaces
with supersingular reduction. 
\end{abstract}
\maketitle

\section{Introduction.}

Let $k=\mathbb{F}_{q}$ be a finite field of characteristic $p\not=2$
and let $F\hookrightarrow\mathbb{P}_{/k}^{4}$ be a smooth cubic hypersurface
defined over $k$. Through a generic point of $\bar{F}$ pass 6 lines
of $\bar{F}$. The variety that parametrizes the lines on $F$ is
a smooth projective surface defined over $k$ called the Fano surface
of lines of $F$. This surface $S$ is minimal of general type.

In \cite{RoulleauTate}, we prove that the Tate conjecture holds for
$S$. Recall that the zeta function $Z(X,T)$ of a smooth variety
$X_{/k}$ encodes the number of rational points of $X$ over the extensions
$k_{r}=\mathbb{F}_{q^{r}}$. The Tate conjecture for $X$ predicts
that the Picard number $\rho_{X}$ of $X_{/k}$ equals to the order
of pole of the zeta function $Z(X,q^{-s})$ at $s=1$. 

Knowing that the Tate conjecture holds for a Fano surface $S$, it
is then natural to study its consequences and applications : this
is the aim of this paper. Using the results of Bombieri and Swinnerton-Dyer
\cite{Bombieri}, we give an algorithm that compute the zeta function
$Z(S,T)$ for a Fano surface $S$ defined over $k$ containing a $k$-rational
point. We are therefore able to obtain the Picard number $\rho_{S}$,
which is an important invariant but usually difficult to compute.

The intermediate Jacobian $J(F)$ of the cubic $F$ is a certain $5$-dimensional
Abelian variety canonically associated to $F$. This variety $J(F)$
is isomorphic over $k$ to the Albanese variety of $S$, and it is
also a Prym variety $Pr(C/\Gamma)$ associated to a degree $2$ cover
$C\to\Gamma$ of a certain plane quintic curve $\Gamma$. As we will
see, it is equivalent to compute the zeta functions $Z(F,T),\, Z(S,T)$
or $Z(J(F),T)$.

In \cite{Kedlaya}, Kedlaya gives an algorithm to compute the zeta
function of cubic threefolds $F$ that are triple abelian cover of
$\mathbb{P}^{3}$ branched over a smooth cubic surface. This algorithm
runs by computing the number of points on the cubic hypersurface $F\hookrightarrow\mathbb{P}^{4}$
and uses the order $3$ symmetries of $F$ in order to reduce the
computations. The algorithm we give (implemented in Sage, see \cite{RoulleauWorksheet})
runs for any cubics containing a $k$-rational line. To our knowledge,
it seems also to be the first algorithm proposed in order to compute
the zeta function of a Prym variety. 

We then study some interesting examples of cubic threefolds and compute
the zeta functions of their Fano surfaces over fields of characteristic
$5$ and $7$. We study also the Fano surface $S_{/\mathbb{Q}}$ of
the Klein cubic threefold : 
\[
F_{/\mathbb{Q}}=\{x_{1}^{2}x_{2}+x_{2}^{2}x_{3}+x_{3}^{2}x_{4}+x_{4}^{2}x_{5}+x_{5}^{2}x_{1}=0\}.
\]
 The surface $S$ has good reduction $S_{p}$ at every prime $p\not=11$. 
\begin{cor}
Let be $p\not=2,\,11$. If $11$ is a square modulo $p$, then $S_{p}$
has geometric Picard number $25$, otherwise $S_{p}$ is supersingular,
i.e. its geometric Picard number equals $45=b_{2}$. 
\end{cor}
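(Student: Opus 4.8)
The plan is to reduce the whole computation to the arithmetic of the five\nobreakdash-dimensional abelian variety $\Alb(S)=J(F)$ and its complex multiplication. First I would recall that the Klein cubic carries an action of $\psl_2(\FF_{11})$ and that $H^{0}(S,\Omega^{1}_{S})$ (a $5$\nobreakdash-dimensional space) is one of the two irreducible $5$\nobreakdash-dimensional representations of this group. Restricting to a unipotent element of order $11$, its eigenvalues are the primitive $11$\nobreakdash-th roots of unity $\zeta_{11}^{a}$ with $a$ ranging over the quadratic residues modulo $11$; equivalently the trace equals the Gauss sum $\tfrac12(-1+\sqrt{-11})$. This exhibits $J(F)$ as an abelian variety with complex multiplication by $\QQ(\zeta_{11})$ whose CM type $\Phi\subset(\ZZ/11)^{\times}$ is precisely the subgroup of squares. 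A key structural remark is that this multiplication is defined only over $\QQ(\sqrt{-11})$, and it is this fact that will govern the supersingular locus.

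Next I would use the isomorphism, classical for Fano surfaces of cubic threefolds, that cup product induces a Galois\nobreakdash-equivariant isomorphism $\wedge^{2}H^{1}(S)\cong H^{2}(S)$, consistent with the numerology $b_{2}=45=\binom{10}{2}$. Writing $\alpha_{1},\dots,\alpha_{10}$ for the eigenvalues of geometric Frobenius on $H^{1}(\bar S_{p})=H^{1}(J(F)_{\bar p})$, the eigenvalues on $H^{2}(\bar S_{p})$ are then exactly the products $\alpha_{i}\alpha_{j}$ with $i<j$. Since the Tate conjecture holds for $S$, the geometric Picard number of $\bar S_{p}$ equals the number of such products lying in $p\cdot\mu_{\infty}$. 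Writing $\alpha_{i}=\sigma_{i}(\pi)$ for the conjugates of the Weil number $\pi\in\QQ(\zeta_{11})$ and using that every conjugate of $\pi$ has absolute value $\sqrt{p}$, one checks that $\sigma_{i}(\pi)\sigma_{j}(\pi)\in p\cdot\mu_{\infty}$ if and only if the ideals satisfy $\sigma_{i}(\mathfrak a)\,\sigma_{j}(\mathfrak a)=(p)$, where $\mathfrak a=(\pi)$: the ratio $\sigma_{i}(\pi)\sigma_{j}(\pi)/p$ is then a unit all of whose conjugates have absolute value $1$, hence a root of unity by Kronecker's theorem.

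It then remains to factor $\mathfrak a$ via Shimura--Taniyama and count. The reduction is ordinary exactly when $p$ splits in the quadratic subfield $\QQ(\sqrt{-11})\subset\QQ(\zeta_{11})$, equivalently when $p$ is a square modulo $11$; this splitting condition is to be rewritten, through quadratic reciprocity, as the condition in the statement on $11$ being a square modulo $p$. In that case $\mathfrak a=\prod_{a\in\Phi}\mathfrak p_{a}$ and $\sigma_{b}(\mathfrak a)$ equals $\mathfrak a$ or its conjugate $\bar{\mathfrak a}$ according as $b$ is or is not a square modulo $11$, so $\sigma_{b}(\mathfrak a)\sigma_{c}(\mathfrak a)=(p)=\mathfrak a\bar{\mathfrak a}$ holds precisely for the mixed pairs, of which there are $5\times 5=25$. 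In the complementary case $p$ is inert in $\QQ(\sqrt{-11})$, the CM descends only to $\FF_{p^{2}}$, $J(F)_{p}$ becomes supersingular, every $\alpha_{i}$ is $\sqrt{p}$ times a root of unity, and all $45$ products qualify, giving geometric Picard number $45=b_{2}$.

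The main obstacle is the middle step: pinning down the precise Weil number $\pi$ and, in the split case, proving that all $25$ mixed products — not merely the $5$ complex\nobreakdash-conjugate pairs giving $\alpha_{i}\bar\alpha_{i}=p$ — lie in $p\cdot\mu_{\infty}$. This is exactly where it is essential that $\Phi$ be the full index\nobreakdash-two subgroup of residues rather than an arbitrary CM type, for otherwise only the conjugate pairs would survive and the count would drop to $5$. The accompanying delicate arithmetic point is the quadratic twist by $\QQ(\sqrt{-11})$ controlling inert primes and its passage, via reciprocity and the character $p\mapsto(\tfrac{-1}{p})$, to the stated Legendre\nobreakdash-symbol condition; this bookkeeping must be carried out with care.
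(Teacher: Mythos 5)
Your proposal is mathematically sound in its main lines, but it takes a genuinely different route from the paper. The paper's proof is far more economical: it quotes from \cite{RoulleauKlein} the isomorphism $J(F)_{/\CC}\simeq E^{5}$ with $E=\CC/\ZZ[\nu]$, $\nu=\tfrac{-1+\sqrt{-11}}{2}$, takes an explicit Weierstrass model of $E$ over $\QQ$ with good reduction away from $11$, and applies Deuring's criterion (\cite{Lang}): $E_{p}$ is supersingular if and only if $p$ is inert or ramified in $\ZZ[\nu]$. The values $25$ and $45$ are then read off from the endomorphism algebra of $E_{p}^{5}$ via \cite{MumfordAV} (Hermitian $5\times5$ matrices over an imaginary quadratic field in the ordinary case, over a quaternion algebra in the supersingular case), not by counting Frobenius eigenvalues. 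You instead work with the degree-$10$ CM field $\QQ(\zeta_{11})$ acting on $J(F)$ through the order-$11$ symmetry, and count Tate classes in $\wedge^{2}H^{1}$ by Shimura--Taniyama. This is heavier --- you must control the field of definition of the CM, the choice of prime of the reflex field, and the case where $p$ has residue degree $>1$ in $\QQ(\zeta_{11})$ --- but it yields the count of $25$ directly and explains why the answer is $25$ rather than merely the $5$ conjugate pairs: the CM type is the full index-two subgroup of squares, so exactly the $5\times5$ mixed pairs contribute. The two pictures are of course compatible, $\QQ(\zeta_{11})$ being a maximal subfield of $M_{5}(\QQ(\sqrt{-11}))$.

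Two caveats. First, the $\ZZ[\zeta_{11}]$-action on $J(F)$ is defined over $\QQ(\zeta_{11})$, not over $\QQ(\sqrt{-11})$ as you assert; this is harmless here since only the geometric Picard number is at stake, but the sentence should be corrected. Second, the ``delicate arithmetic point'' you defer does not in fact close as stated: quadratic reciprocity gives $\left(\frac{-11}{p}\right)=\left(\frac{p}{11}\right)$, so ``$p$ splits in $\QQ(\sqrt{-11})$'' is equivalent to ``$p$ is a square mod $11$'', which agrees with ``$11$ is a square mod $p$'' only when $p\equiv1\pmod 4$ (for instance $p=3$ splits, although $11$ is not a square mod $3$). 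The paper's own proof asserts the same equivalence without the correcting factor $\left(\frac{-1}{p}\right)$, so the discrepancy is inherited from the source rather than introduced by you, but if you carry out the bookkeeping carefully you will land on the condition $\left(\frac{-11}{p}\right)=1$, not the one in the statement.
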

To the knowledge of the author, there are only a few cases where the
zeta function of surfaces have been computed : apart from the cases
of Abelian surfaces and a product of two curves, the zeta function
has been computed for some examples of $K3$, some elliptic surfaces,
for Fermat and some Delsarte hypersurfaces (see \cite{Schuett} and
the references therein).

\textbf{Acknowledgements.} The author gratefully thanks Bert van Geemen
for his suggestion to consider the conic bundle structure for computing
the zeta function of cubics, Kiran Kedlaya for emails exchanges and
Olivier Debarre for its help in the proof of Corollary \ref{cor:Picard number 5}.
This paper has been supported under FCT grant SFRH/BPD/72719/2010
and project Géometria Algébrica PTDC/MAT/099275/2008.

\section{The zeta function of a cubic threefold and its Fano surface.}

\subsection{Notations, hypothesis.}

Let $F$ be a smooth cubic hypersurface defined over a field $k$
of characteristic not $2$ and let $S$ be its Fano surface. The surface
$S$ is a smooth geometrically connected variety defined over $k$
\cite[Thm 1.16 i and (1.12)]{Altman}. We will suppose that $S$ has
a rational point $s_{o}$ i.e. that $F$ contains a line defined over
$k$. The Albanese variety $A$ of $S$ is $5$ dimensional and is
defined over $k$ \cite[Lem. 3.1]{Achter}. Let $\vartheta:S\to A$
be the Albanese map such that $\vartheta s_{o}=0$. Let $\Theta$
be the (reduced) image of $S\times S$ under the map $(s_{1},s_{2})\to\vartheta s_{1}-\vartheta s_{2}$.
The variety $\Theta$ is a divisor on $A$ defined over $k$ and $(A,\Theta)$
is a principally polarized abelian variety \cite[Prop. 5]{Beauville}.
In this paper, we call the pair $J(F)=(A,\Theta)$ the intermediate
Jacobian of $F$.

\subsection{Zeta functions, Weil polynomials}

Let $X$ be a smooth projective $n$-dimensional variety over a finite
field $k=\mathbb{F}_{q}$ such that $\bar{X}$ (the variety over an
algebraic closure $\bar{k}$ of $k$) is smooth. Let $N_{r}$ be the
number of rational points of $\bar{X}$ over $\mathbb{F}_{q^{r}}$.
The zeta function of $X$ is defined by $Z(X,T)=exp(\sum_{r\geq0}N_{r}\frac{T^{r}}{r})$
and can be written: 
\[
Z(X,T)=\sum_{i=0}^{i=2n}P_{i}(X,T)^{(-1)^{i+1}}
\]
 where $P_{i}(X,T)$ is the Weil polynomial with integer coefficients:
\[
P_{i}(X,T)=\det(1-\pi^{*}|H^{i}(\bar{X},\mathbb{Q}_{\ell})),
\]
 for $\pi$ the Frobenius endomorphism of $\bar{X}$. Suppose that
$X$ is a surface. Let $Br(X)$ be the Brauer group of $X$ and let
$NS_{k}(X)$ be its Néron-Severi group. 
\begin{thm}
(\cite{Milne}, Artin-Tate Conjecture). Suppose that the Tate conjecture
is satisfied by $X$ and the characteristic of $k$ is at least $3$.
The Brauer group of $X$ is finite and:
\[
\lim_{s\to1}\frac{P_{2}(X,q^{-s})}{(1-q^{1-s})^{\rho_{X}}}=\frac{(-1)^{\rho_{X}-1}|Br(X)|Disc(NS_{k}(X))}{q^{\alpha(X)}|NS_{k}(X)_{tor}|^{2}}
\]
 where $\alpha(X)=\chi(X,\mathcal{O}_{X})-1+\dim PicVar(X)$, $NS_{k}(X)_{tor}$
is the torsion sub-group of $NS_{k}(X)$, $\rho_{X}$ and $Disc(NS_{k}(X))$
are respectively the rank and the absolute value of the discriminant
of $NS_{k}(X)/NS_{k}(X)_{tor}$. 
\end{thm}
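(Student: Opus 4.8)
The plan is to read the left-hand side off from the Tate conjecture and then to match it with the arithmetic quantities on the right prime by prime, using the Kummer sequence, Poincar\'e duality, and flat cohomology to account separately for the prime-to-$p$ part and the $p$-part.

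First I would analyse the pole. Writing $P_{2}(X,T)=\prod_{i}(1-\alpha_{i}T)$ with $|\alpha_{i}|=q$ by Deligne's bounds, and recalling that for a surface $P_{2}$ sits in the denominator of $Z(X,T)$, the only source of a pole of $Z(X,q^{-s})$ at $s=1$ is the factor $(1-qT)$ evaluated at $T=q^{-s}$, namely $1-q\cdot q^{-s}=1-q^{1-s}$. The Tate conjecture for $X$ identifies the order of this pole with $\rho_{X}$, i.e. the multiplicity of $q$ among the $\alpha_{i}$ is exactly $\rho_{X}$. Hence $P_{2}(X,T)=(1-qT)^{\rho_{X}}R(T)$ with $R(q^{-1})\neq0$, and since $1-q^{1-s}=1-q\cdot q^{-s}$ the ratio collapses to $R(q^{-s})$, so
\[
\lim_{s\to1}\frac{P_{2}(X,q^{-s})}{(1-q^{1-s})^{\rho_{X}}}=R(q^{-1})=\prod_{\alpha_{i}\neq q}\Bigl(1-\frac{\alpha_{i}}{q}\Bigr).
\]
After the Tate twist this is $\det\bigl(1-\varphi\mid V'\bigr)$, where $\varphi=q^{-1}\pi^{*}$ acts on $H^{2}(\bar{X},\QQ_{\ell}(1))$ and $V'$ is a $\varphi$-stable complement of the Frobenius-fixed (conjecturally algebraic) subspace; the remaining work is to express this determinant arithmetically and to see that it is a rational number independent of $\ell$.

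Next I would bring in the Kummer sequence $0\to\mu_{\ell^{n}}\to\mathbb{G}_{m}\to\mathbb{G}_{m}\to0$. Passing to \'etale cohomology over $\bar{X}$ and taking inverse limits realises $H^{2}(\bar{X},\ZZ_{\ell}(1))$ as an extension of the Tate module $T_{\ell}Br(\bar{X})$ by $NS(\bar{X})\otimes\ZZ_{\ell}$, and the cycle class map sends $NS_{k}(X)\otimes\ZZ_{\ell}$ into the Frobenius invariants. Taking $\pi$-invariants and $\pi$-coinvariants, and using the Hochschild--Serre spectral sequence to relate the global Brauer group $Br(X)$ to $Br(\bar{X})^{\pi}$ and $H^{1}(\pi,\Pic(\bar{X}))$, I would deduce finiteness of $Br(X)$ from the Tate conjecture and identify $V'$ with the transcendental part carrying $T_{\ell}Br$. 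The value $\det(1-\varphi\mid V')$ then decomposes, at each $\ell\neq p$, into a \emph{lattice} contribution and a \emph{torsion/Brauer} contribution: Poincar\'e duality makes the cup product a perfect pairing on $H^{2}$, so restricted to the image of the cycle class map it becomes the intersection pairing on $NS_{k}(X)$, whose Gram determinant is $\pm Disc(NS_{k}(X))$, while the index of this image inside the saturated lattice contributes $|NS_{k}(X)_{tor}|^{2}$ in the denominator and the order of the (finite) $\ell$-primary part of $Br(X)$ in the numerator. Comparing $\ell$-adic valuations for every $\ell\neq p$ pins down the prime-to-$p$ part of the asserted equality.

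The main obstacle is the power $q^{\alpha(X)}$, which is invisible to $\ell$-adic cohomology for $\ell\neq p$ and governs precisely the normalisation that $\ell$-adic methods cannot detect. To obtain it I would replace the Kummer sequence by its flat (fppf) analogue with $\mu_{p^{n}}$-coefficients, or equivalently work with crystalline cohomology and its Frobenius slope decomposition: the factor $q^{\alpha(X)}$ with $\alpha(X)=\chi(X,\OO_{X})-1+\dim PicVar(X)$ is exactly the contribution of the part of $H^{2}$ of slope less than one, computed from the Hodge numbers, and the same flat-cohomological input supplies the $p$-part of $|Br(X)|$ so that all primes are treated uniformly. Reconciling this $p$-adic normalisation with the $\ell$-adic computation above is the delicate step, and it is here that the hypothesis $\mathrm{char}\,k\geq3$ enters; assembling the per-prime identities into the single displayed formula then completes the argument along the lines of Milne's proof.
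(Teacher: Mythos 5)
The paper gives no proof of this statement: it is quoted as a known theorem with a citation to Milne's \emph{On a conjecture of Artin and Tate}, so there is nothing internal to compare against. Your outline is a faithful summary of the standard Tate--Milne argument from that reference (pole order from the Tate conjecture, Kummer sequence and Poincar\'e duality for the prime-to-$p$ bookkeeping of $Disc(NS_{k}(X))$, $|Br(X)|$ and the torsion, and flat/crystalline cohomology for the $p$-part and the factor $q^{\alpha(X)}$), so it matches the approach the paper relies on, though of course it remains a proof plan rather than a complete argument.
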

Note that the order of a finite Brauer group is always a square. Let
$S$ be the Fano surface of a smooth cubic threefold $F\hookrightarrow\mathbb{P}^{4}$.
The Picard variety $PicVar(S)$ of $S$ is reduced of dimension $5$
\cite[Lem. 1.1]{Tyurin1}, and $\chi=6$, therefore $\alpha(S)=10$. 

As $H^{2}(\bar{S},\mathbb{Q}_{\ell})\simeq\wedge^{2}H^{1}(\bar{A},\mathbb{Q}_{\ell})\simeq H^{2}(\bar{A},\mathbb{Q}_{\ell})$,
the zeta function of $S$ can be computed if we know the zeta function
of $A$ ; in fact we just need to know the action of the Frobenius
on $H^{1}(\bar{A},\mathbb{Q}_{\ell})$.
\begin{thm}
Suppose characteristic $\not=2$. The étale cohomology groups $H^{3}(\bar{F},\mathbb{Q}_{\ell})$
and $H^{1}(\bar{A},\mathbb{Q}_{\ell}(1))$ are isomorphic as Galois
modules.\end{thm}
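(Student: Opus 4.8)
The plan is to realise the isomorphism geometrically through the universal family of lines and the cylinder (Abel--Jacobi) homomorphism, and then to establish bijectivity by reduction to the classical complex case. First I would introduce the incidence variety $I=\{(s,x)\in S\times F : x\in L_{s}\}$, where $L_{s}\subset F$ is the line parametrised by $s$, together with its two projections $p\colon I\to S$ and $q\colon I\to F$. All of $I$, $p$, $q$ are defined over $k$. Since $F$ is smooth and the characteristic is $\neq 2$, the surface $S$ is smooth, $p$ is a $\mathbb{P}^{1}$-bundle (the projectivisation of the universal line), so $\bar I$ is a smooth projective threefold, and $q$ is surjective and generically finite of degree $6$. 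Let $\xi=c_{1}(\mathcal{O}_{I}(1))\in H^{2}(\bar I,\mathbb{Q}_{\ell}(1))$ be the relative hyperplane class; by the projective bundle formula one has $H^{\ast}(\bar I,\mathbb{Q}_{\ell})=p^{\ast}H^{\ast}(\bar S,\mathbb{Q}_{\ell})\oplus\xi\cup p^{\ast}H^{\ast-2}(\bar S,\mathbb{Q}_{\ell}(-1))$.

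Next I would define the cylinder homomorphism $\Psi(\alpha)=q_{\ast}(\xi\cup p^{\ast}\alpha)$ sending $H^{1}(\bar S,\mathbb{Q}_{\ell})$ to $H^{3}(\bar F,\mathbb{Q}_{\ell})$, up to the Tate twist forced by matching weights (the eigenvalues of Frobenius on $H^{3}(\bar F)$ have absolute value $q^{3/2}$, those on $H^{1}(\bar A)$ have absolute value $q^{1/2}$, so Poincar\'e duality pins down the unique admissible twist, namely the one in the statement). Since the Albanese map $\vartheta\colon S\to A$ induces a Galois-equivariant isomorphism $\vartheta^{\ast}\colon H^{1}(\bar A,\mathbb{Q}_{\ell})\xrightarrow{\sim}H^{1}(\bar S,\mathbb{Q}_{\ell})$, we may regard $\Psi$ as a map $H^{1}(\bar A,\mathbb{Q}_{\ell})\to H^{3}(\bar F,\mathbb{Q}_{\ell})$ with the stated twist. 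Because $I$, $p$, $q$ and the line bundle $\mathcal{O}_{I}(1)$ are all defined over $k$, and pullback, cup product and the Gysin map are equivariant for $\gal(\bar k/k)$, the map $\Psi$ is automatically a morphism of Galois modules; it remains only to prove it is bijective as a map of $\mathbb{Q}_{\ell}$-vector spaces.

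Both source and target have dimension $10$ (here $b_{3}(F)=10=b_{1}(S)=2\dim A$), so it suffices to show $\Psi$ is injective, or equivalently an isomorphism. The route I would take is specialisation from characteristic $0$: lift $F$ to a smooth cubic $\mathcal{F}$ over a complete discrete valuation ring $R$ with residue field $k$ and fraction field of characteristic $0$; the Fano surface $\mathcal{S}$ and the incidence variety $\mathcal{I}$ then spread out to smooth proper $R$-schemes, with $\mathcal{I}\to\mathcal{S}$ a $\mathbb{P}^{1}$-bundle, and $\Psi$ is defined over $R$ compatibly with the smooth-proper-base-change isomorphisms on $H^{1}$ and $H^{3}$. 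Over the geometric generic fibre one is in characteristic $0$, where the comparison theorem and the theorem of Clemens--Griffiths identify $\Psi$ with the cylinder isomorphism of Hodge structures underlying $J(F)\cong\Alb(S)$; hence $\Psi$ is bijective there. Bijectivity is preserved by the base-change isomorphisms, so $\Psi$ is bijective over $k$ as well, which proves the theorem.

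The main obstacle I anticipate is this last step: one must check that the cylinder correspondence genuinely spreads out over $R$ and is compatible with the specialisation maps, which requires the smoothness of the Fano scheme and of the $\mathbb{P}^{1}$-bundle $\mathcal{I}\to\mathcal{S}$ throughout the family (this is exactly where the hypothesis $\mathrm{char}\,k\neq2$ enters). An alternative, characteristic-free argument would avoid the lift: compute the composite correspondence $I^{t}\circ I$ on $S\times S$ by intersection theory on $S$ (as done by Tyurin and Beauville over $\mathbb{C}$, here redone $\ell$-adically), showing that $\Psi^{t}\circ\Psi$ is a nonzero multiple of a Galois-equivariant automorphism of $H^{1}(\bar S,\mathbb{Q}_{\ell})$; this yields injectivity of $\Psi$ directly, at the cost of carrying out the full intersection-theoretic computation.
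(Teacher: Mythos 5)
Your argument is essentially correct in outline, but it is a genuinely different route from the paper's. The paper disposes of the theorem in two lines by citing Murre's algebraic theory of the intermediate Jacobian: Murre proves, entirely in characteristic $\neq 2$ and with no recourse to $\mathbb{C}$, that $H^{3}(\bar{F},\mathbb{Q}_{\ell})\simeq T_{\ell}(A)\otimes\mathbb{Q}_{\ell}(1)$ (with $A$ realised as the algebraic representative of algebraically trivial $1$-cycles on $F$, constructed via the conic-bundle/Prym description), and then one identifies $T_{\ell}(A)\otimes\mathbb{Q}_{\ell}$ with $H^{1}(\bar{A},\mathbb{Q}_{\ell})$ via the principal polarisation. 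What the paper's route buys is brevity and a statement that is intrinsically characteristic-free (indeed Murre's result is stronger, identifying $A$ with $CH^{2}(\bar F)_{\mathrm{alg}}$); what your route buys is an explicit geometric correspondence inducing the isomorphism, at the cost of a lifting argument. Two points in your sketch deserve care. First, the cleanest complex input is the Clemens--Griffiths/Tyurin statement that $p_{*}q^{*}\colon H^{3}(F,\mathbb{Z})\to H^{1}(S,\mathbb{Z})$ is an isomorphism; your map $\alpha\mapsto q_{*}(\xi\cup p^{*}\alpha)$ is not literally that map nor its Poincar\'e transpose, so over $\mathbb{C}$ its bijectivity still requires the Tyurin--Beauville intersection computation you mention in your last paragraph --- it would be simpler to run your specialisation argument directly on $p_{*}q^{*}$ (in the direction $H^{3}\to H^{1}$), for which the complex statement is exactly the one in the literature. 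Second, the spreading-out step is fine but should be stated precisely: lift $F$ to a smooth cubic over $W(\mathbb{F}_{q})$, invoke smoothness and properness of the relative Fano scheme (Altman--Kleiman) so that smooth proper base change applies to $\mathcal{S}$, $\mathcal{I}$ and $\mathcal{F}$ simultaneously, and use compatibility of pullback, cup product with $c_{1}$ of a line bundle defined over the base, and the Gysin map with cospecialisation. With those two points tightened, your proof is complete and is a legitimate alternative to the citation of Murre; the weight bookkeeping you do to pin down the Tate twist is also the right sanity check (and in fact exposes a sign-convention ambiguity in the twist as printed in the statement, since $H^{3}(\bar F,\mathbb{Q}_{\ell})$ has weight $3$ while $H^{1}(\bar A,\mathbb{Q}_{\ell})(1)$ in the standard convention has weight $-1$; the intended module is $H^{1}(\bar A,\mathbb{Q}_{\ell})(-1)$).
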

\begin{proof}
By \cite[(14) and Thm 3]{Murre}, we have $H^{3}(\bar{F},\mathbb{Q}_{\ell})\simeq T_{\ell}(A)\otimes\mathbb{Q}_{\ell}(1)$
where $T_{\ell}(A)$ is the Tate module of $A$. But $T_{\ell}(A)\otimes\mathbb{Q}_{\ell}$
is isomorphic to $H^{1}(\bar{A},\mathbb{Q}_{\ell})$.\end{proof}
\begin{rem}
It would be interesting to know a bound on the order of the torsion
subgroup $NS(S)_{tor}$. In the examples we give in Section 6, we
compute the values of $\lim_{s\to1}\frac{P_{2}(S,q^{-s})}{(1-q^{1-s})^{\rho_{S}}}$.
For all the limits we computed, we get results on the form $\frac{N}{q^{10}},\, N\in\mathbb{Z}$,
giving the idea that the group $NS(S)_{tor}$ should be trivial. 
\end{rem}

\subsection{Zeta function of a smooth cubic threefold.}

The upper half part of the Hodge diamond of a smooth cubic threefold
$F$ over $\mathbb{C}$ is:
\[
\begin{array}{ccccccc}
 &  &  & 1\\
 &  & 0 &  & 0\\
 & 0 &  & 1 &  & 0\\
0 &  & 5 &  & 5 &  & 0
\end{array}
\]
 Thus $P_{0}(F,T)=1-T$, $P_{1}(F,T)=1=P_{5}(F,T)$, $P_{2}(F,T)=1-qT$.
The polynomials $P_{4}$ and $P_{6}$ are computed by Poincaré duality:
$P_{4}=1-q^{2}T,\, P_{6}(F,T)=1-q^{3}T$. 
\begin{cor}
We have:
\[
Z(F,T)=\frac{P_{3}(F,T)}{(1-T)(1-qT)(1-q^{2}T)(1-q^{3}T)},
\]
 and
\[
N_{r}(F)=(1+q^{r}+q^{2r}+q^{3r})-q^{r}Tr(\pi^{r}|H^{1}(\bar{A},\mathbb{Q}_{\ell})),
\]
 where $\pi$ is the Frobenius endomorphism of $\bar{A}$ and $Tr$
denotes the trace. 
\end{cor}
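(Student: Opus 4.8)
The plan is to assemble the zeta function from the individual Weil polynomials $P_i(F,T)$ already determined above, and then to convert the general point-count formula coming from the Lefschetz trace formula into the explicit expression claimed. First I would recall the factorization
\[
Z(F,T)=\prod_{i=0}^{6}P_{i}(F,T)^{(-1)^{i+1}}.
\]
Plugging in the computed polynomials $P_{0}=1-T$, $P_{1}=P_{5}=1$, $P_{2}=1-qT$, $P_{4}=1-q^{2}T$, $P_{6}=1-q^{3}T$, the odd-indexed factors appear in the numerator and the even-indexed factors in the denominator. Since $P_{1}=P_{5}=1$ contribute nothing, the only surviving numerator factor is $P_{3}(F,T)$, which immediately yields the displayed formula for $Z(F,T)$.

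For the point-count statement I would start from the Lefschetz trace formula
\[
N_{r}(F)=\sum_{i=0}^{6}(-1)^{i}\,Tr(\pi^{r}\mid H^{i}(\bar{F},\mathbb{Q}_{\ell})).
\]
The even cohomology groups $H^{0},H^{2},H^{4},H^{6}$ are each one-dimensional with Frobenius acting by $1,q,q^{2},q^{3}$ respectively (read off from the $P_{i}$ above), while $H^{1}=H^{5}=0$. Hence the even terms contribute exactly $1+q^{r}+q^{2r}+q^{3r}$, and the only odd contribution is $-Tr(\pi^{r}\mid H^{3}(\bar{F},\mathbb{Q}_{\ell}))$.

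The remaining step, and the one place where the preceding theorem is essential, is to rewrite the $H^{3}$ trace in terms of $A$. By the Galois-module isomorphism $H^{3}(\bar{F},\mathbb{Q}_{\ell})\simeq H^{1}(\bar{A},\mathbb{Q}_{\ell}(1))$ proved just above, the Frobenius acting on $H^{3}(\bar{F},\mathbb{Q}_{\ell})$ is identified with its action on $H^{1}(\bar{A},\mathbb{Q}_{\ell})$ after the Tate twist by $\mathbb{Q}_{\ell}(1)$. Since the twist multiplies each eigenvalue by $q$ (equivalently, scales $\pi^{r}$ by $q^{r}$), one gets
\[
Tr(\pi^{r}\mid H^{3}(\bar{F},\mathbb{Q}_{\ell}))=q^{r}\,Tr(\pi^{r}\mid H^{1}(\bar{A},\mathbb{Q}_{\ell})).
\]
Substituting this into the trace formula produces the claimed expression for $N_{r}(F)$. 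I expect no genuine obstacle here; the only point demanding care is the bookkeeping of the Tate twist, making sure the factor $q^{r}$ comes out with the correct sign and exponent, and confirming that the remark $P_{2}(F,T)=1-qT$ is the normalization consistent with $\pi$ acting as $q$ on $H^{2}$.
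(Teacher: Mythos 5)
Your proof is correct and takes exactly the route the paper intends: the paper offers no separate argument for this corollary, treating it as immediate from the listed Weil polynomials $P_{0},P_{1},P_{2},P_{4},P_{5},P_{6}$ together with the preceding theorem identifying $H^{3}(\bar{F},\mathbb{Q}_{\ell})$ with a Tate twist of $H^{1}(\bar{A},\mathbb{Q}_{\ell})$. Your bookkeeping of the twist (multiplying each Frobenius eigenvalue by $q^{r}$, so the $H^{3}$ eigenvalues have weight $3$) is the normalization consistent with the paper's subsequent identity $P_{1}(S,T)=P_{3}(F,\frac{T}{q})$, so there is nothing to add.
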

The roots of $P_{i}(F,T)$ have absolute value $q^{-i/2}$. We have:
\[
P_{1}(S,T)=P_{1}(A,T)=P_{3}(F,\frac{T}{q}).
\]
 Since by a computer it is possible to compute the number of points
of $F$, it is possible in theory to get the zeta function of $F,A$
and $S$. Kedlaya's algorithm computes the number of points of cubics
$F$ that are cyclic triple cover of $\mathbb{P}^{3}$. Alternatively,
we can also compute the number of points of $A$, which is a Prym
variety of a degree $2$ étale cover of a certain plane quintic ;
this is the algorithm we have implemented.

\subsection{Zeta function and Picard number of a Fano surface.}

Let $S$ be the Fano surface of a smooth cubic $F\hookrightarrow\mathbb{P}^{4}$
over $\mathbb{F}_{q}$. We have
\[
Z(S,T)=\frac{P_{1}(S,T)P_{3}(S,T)}{(1-T)P_{2}(S,T)(1-q^{2}T)}.
\]
 We can write $P_{1}=\prod_{i=1}^{i=10}(1-\omega_{i}T)\in\mathbb{Z}[T]$
with $|\omega_{i}|=q^{1/2}$. As 
\[
H^{2}(\bar{S},\mathbb{Q}_{\ell})=\wedge^{2}H^{1}(\bar{S},\mathbb{Q}_{\ell})
\]
 we get $P_{2}(S,T)=\prod_{1\leq i<j\leq10}(1-\omega_{i}\omega_{j}T)$.
Moreover $P_{3}(S,T)=q^{15}T^{10}P_{1}(S,\frac{1}{q^{2}T}).$ Thus
\begin{equation}
Z(S,T)=\frac{\prod_{i=1}^{i=10}(1-\omega_{i}T)\prod_{i=1}^{i=10}(1-\frac{q^{2}}{\omega_{i}}T)}{(1-T)(1-q^{2}T)\prod_{1\leq i<j\leq10}(1-\omega_{i}\omega_{j}T)}.\label{Zeta Function formula}
\end{equation}
 The order of the pole at $1$ of $Z(S,q^{-s})$ equals the number
of elements of the set $\{(i,j)/1\leq i<j\leq10\mbox{ and }\omega_{i}\omega_{j}=q\}$
; it is the multiplicity of the root $1/q$ in $P_{2}(S,T)=P_{2}(A,T)$.
The Tate conjecture for Fano surfaces proved in \cite{RoulleauTate}
predicts that the Picard number $\rho_{S}$ of $S$ is equal to the
order of pole of the zeta function $Z(S,q^{-s})$ at $s=1$, thus
by \ref{Zeta Function formula}, we get: 
\begin{cor}
\label{cor:Picard number 5}The Fano surface has Picard number at
least $5$. \end{cor}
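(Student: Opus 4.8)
The plan is to extract $\rho_S$ directly from the shape of the zeta function. By the Tate conjecture for Fano surfaces (\cite{RoulleauTate}) applied to formula (\ref{Zeta Function formula}), the Picard number $\rho_S$ equals the order of the pole of $Z(S,q^{-s})$ at $s=1$, which as noted above is the cardinality of $\{(i,j):1\le i<j\le 10,\ \omega_i\omega_j=q\}$, i.e. the multiplicity of $q$ as an eigenvalue of the Frobenius $\pi$ on $\wedge^2 H^1(\bar A,\QQ_\ell)$. So everything reduces to showing this multiplicity is at least $5$, using only that the $\omega_i$ are the weight-one Weil numbers of the $5$-dimensional abelian variety $A$.

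First I would record the pairing structure of the ten eigenvalues. Since $P_1(A,T)\in\ZZ[T]$ the multiset $\{\omega_1,\dots,\omega_{10}\}$ is stable under complex conjugation, and $|\omega_i|=q^{1/2}$ gives $\bar\omega_i=q/\omega_i$; hence the $\omega_i$ fall into non-real conjugate pairs $\{\omega,q/\omega\}$ together with real eigenvalues equal to $\pm q^{1/2}$. Each conjugate pair has product $q$, and so does any pair of equal copies of $+q^{1/2}$, or of $-q^{1/2}$, whereas a $+q^{1/2}$ paired with a $-q^{1/2}$ gives $-q$ and contributes nothing. Writing $m_+,m_-$ for the multiplicities of $\pm q^{1/2}$ and $k$ for the number of conjugate pairs, we have $2k+m_++m_-=10$ and the multiplicity to be counted is $k+\binom{m_+}{2}+\binom{m_-}{2}$.

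The crucial point is that $m_+$ and $m_-$ are both even. The cleanest reason is that the principal polarization endows $H^1(\bar A,\QQ_\ell)$ with a perfect alternating Frobenius pairing satisfying $\langle\pi x,\pi y\rangle=q\langle x,y\rangle$; the generalized eigenspace for a self-paired eigenvalue $\pm q^{1/2}$ then carries a non-degenerate alternating form and is therefore even-dimensional. Equivalently, $\det(\pi\mid H^1(\bar A,\QQ_\ell))=q^5>0$ is the product of all eigenvalues, which equals $(-1)^{m_-}q^5$, forcing $m_-$ even; and since $m_++m_-=10-2k$ is even, $m_+$ is even too. Granting this, a direct computation gives
\[
k+\binom{m_+}{2}+\binom{m_-}{2}=4+\tfrac12\big((m_+-1)^2+(m_--1)^2\big)\ge 5,
\]
the inequality holding because $m_+-1$ and $m_--1$ are odd, so each square is at least $1$. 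Hence the multiplicity is at least $5$ and $\rho_S\ge 5$.

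The main obstacle is precisely this parity statement. The naive argument --- ``the ten eigenvalues form five conjugate pairs, each with product $q$'' --- is correct generically but breaks down a priori when real eigenvalues occur; for instance a single $+q^{1/2}$ together with a single $-q^{1/2}$ would drop the count to $4$. Ruling out such configurations is what the polarization (equivalently the functional equation fixing $\det\pi=q^5$) is for, and this is presumably where Debarre's help entered. I would also note for completeness that $\pi$ acts semisimply on $H^1(\bar A,\QQ_\ell)$, so that ``algebraic multiplicity of the eigenvalue $q$'' and ``number of index pairs $(i,j)$'' genuinely coincide --- though for the order of the pole only the characteristic polynomial intervenes, so semisimplicity is not strictly needed.
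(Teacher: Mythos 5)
Your argument is correct, but it resolves the key difficulty by a different mechanism than the paper. Both proofs reduce the statement to counting pairs $\omega_i\omega_j=q$ and both identify the same obstruction, namely the possible presence of real eigenvalues $\pm q^{1/2}$ which need not pair up to give product $q$. The paper handles this by reducing to the case where $A$ is simple and invoking the Honda--Tate theorem: a simple abelian variety over $\mathbb{F}_q$ whose Weil numbers are real is forced to be one of two very special types ($g=1$ or $g=2$) in which every root of $P_1$ has multiplicity $2$, so the count goes through. You instead prove the parity statement that the multiplicities $m_+$ and $m_-$ of $+q^{1/2}$ and $-q^{1/2}$ are both even, either via the symplectic Weil pairing twisted by $q$ under Frobenius, or more simply from $\det(\pi\mid H^1)=q^5>0$, and then conclude by the elementary estimate $k+\binom{m_+}{2}+\binom{m_-}{2}=4+\tfrac12((m_+-1)^2+(m_--1)^2)\ge 5$. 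Your route is more self-contained (no Honda--Tate, no reduction to simple isogeny factors) and yields the general fact $\rho_A\ge\dim A$ for an arbitrary abelian variety over a finite field in one stroke; the paper's route is shorter to state but leans on a classification theorem. One small imprecision: your expression $k+\binom{m_+}{2}+\binom{m_-}{2}$ for the number of index pairs is exact only when the non-real eigenvalues are simple; if a non-real conjugate pair occurs with multiplicity $m>1$ it contributes $m^2$ rather than $m$ to the count, so your formula is in general only a lower bound --- which is all you need, but you should say so.
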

\begin{proof}
This follows from the more general fact that over a finite field,
the rank of the Néron-Severi group of an Abelian variety $A$ is always
larger or equal to $\dim A=g$. For the proof, one may assume that
$A$ is simple. Let $\omega_{1},...,\omega_{2g}$ be the inverse of
the roots of the characteristic polynomial $P_{1}$. We must prove
that at least $g$ among the products $\omega_{i}\omega_{j}$, for
$i<j$, are equal to $q$. Since $q/\omega_{i}$ is also a root, the
difficulty occurs only when there are real roots, ie roots equal to
$\pm q^{1/2}$. Since $A$ is simple, the Honda-Tate Theorem (\cite[Th. 4.7.2]{MZ})
implies that if $q^{1/2}\notin\ZZ$, then $g=2$, $P_{1}(T)=(T^{2}-q)^{2}$
and if $q'=q^{1/2}\in\ZZ$, then $P_{1}(T)=(T\pm q')^{2}$, $g=1$.
In both cases the multiplicities of the roots are $2$.
\end{proof}
Let $N_{r}(X)$ be the number of rational points of a variety $X$
over $\mathbb{F}_{q^{r}}$. For each Weil polynomial $P_{i}(X,T)$,
let $\omega_{i,1},\dots,\omega_{i,b_{i}}$ be the reciprocal roots
of $P_{i}$. The formula expressing the zeta function gives:
\[
N_{r}(X)=\sum_{i=0}^{i=2\dim X}(-1)^{i}\sum_{j=1}^{j=b_{i}}\omega_{i,j}^{r}.
\]
 We will use in section \ref{section examples} this formula for computing
the numbers $N_{1}(S),N_{2}(S)$ of the Fano surface $S$ from the
knowledge of the reciprocal roots $\omega_{1,1},\dots,\omega_{1,10}$
of $P_{1}(S,T)$.

\section{The intermediate Jacobian as a Prym variety.}

Let $F\hookrightarrow\mathbb{P}^{4}$ be a smooth cubic threefold
defined over the finite field $k=\mathbb{F}_{q}$ of characteristic
not $2$. We will suppose that $F$ contains a line $L$ defined over
$k$, or equivalently that the Fano surface contains a $k$-rational
point. 

The aim of this section is to give an algorithm that compute the zeta
function of the cubic $F$, the Fano surface $S$ and the Albanese
variety $A$ by using the conic bundle structure on an appropriate
blow-up of $F$. 

The algorithm will use two auxiliary curves : $C_{L}$ the incidence
curve parametrizing the lines on $F$ meeting $L$ and $\Gamma_{L}$
the curve parametrizing planes $Y$ containing $L$ and such that
the degree $3$ plane curve $Y\cap F$ is the union of three lines.

Let $x_{1},\dots,x_{5}$ be coordinates in $\mathbb{P}^{4}$ and let
$L\hookrightarrow F$ be a $k$-rational line on $F$. We can suppose
that $L=\{x_{1}=x_{2}=x_{3}=0\}$ and $F=\{F_{eq}=0\}$ with 
\[
F_{eq}=\ell_{1}x_{4}^{2}+2\ell_{2}x_{4}x_{5}+\ell_{3}x_{5}^{2}+2q_{1}x_{4}+2q_{2}x_{5}+f,
\]
 where $\ell_{1},\ell_{2},\ell_{3}$ are linear, $q_{1},q_{2}$ are
quadratic and $f$ is a cubic homogenous forms in the variables $x_{1},x_{2},x_{3}$.
Let $X\hookrightarrow\mathbb{P}^{4}$ be the plane $x_{4}=x_{5}=0$.
Any plane $Y\hookrightarrow\mathbb{P}^{4}$ containing the line $L$
meets $X$ into a unique point. Thus the plane $X$ parametrizes planes
in $\mathbb{P}^{4}$ containing $L$. For a $\bar{k}$-point $x=(x_{1}:x_{2}:x_{3}:0:0)$
of $X=\mathbb{P}^{2}$, we denote by $Y_{x}$ the unique plane containing
$x$. The intersection of $F$ and $Y_{x}$ has equation $F_{eq}(y_{1}x_{1}:y_{1}x_{2}:y_{1}x_{3}:y_{2}:y_{3})=0$,
i.e.: 
\[
y_{1}(\ell_{1}y_{2}^{2}+2\ell_{2}y_{2}y_{3}+\ell_{3}y_{3}^{2}+2q_{1}y_{1}y_{2}+2q_{2}y_{1}y_{3}+y_{1}^{2}f)=0
\]
 in the plane $Y_{x}$ with coordinates $y_{1},y_{2},y_{3}$. Therefore,
the intersection $Y\cap F$ equals $L+Q_{x}$ where 
\[
Q_{x}=\{\ell_{1}y_{2}^{2}+2\ell_{2}y_{2}y_{3}+\ell_{3}y_{3}^{2}+2q_{1}y_{1}y_{2}+2q_{2}y_{1}y_{3}+y_{1}^{2}f=0\}.
\]
 The conic $Q_{x}$ is irreducible over $\bar{k}$ if and only if
it is smooth. The scheme parametrizing the planes $Y_{x}$ such that
$Q_{x}$ decomposes (over an extension) as $Q_{x}=L_{1}+L_{2}$ with
$L_{1},L_{2}$ two lines, is a plane reduced quintic curve $\Gamma_{L}\hookrightarrow X=\mathbb{P}^{2}(x_{1},x_{2},x_{3})$
defined over $k$, of equation $\det M_{\Theta}=0$, where: 
\[
M_{\Theta}=\left(\begin{array}{ccc}
\ell_{1} & \ell_{2} & q_{1}\\
\ell_{2} & \ell_{3} & q_{2}\\
q_{1} & q_{2} & f
\end{array}\right).
\]
 Let $C_{L}$ be the incidence scheme parametrizing the lines that
meet $L$. 
\begin{lem}
(\cite[Lem. 2, 4]{Bombieri}, \cite[Pro. (1.25)]{MurreCompo1}). The
plane quintic curve $\Gamma_{L}$ has at most nodal singularities.
A points $x$ on $\Gamma_{L}$ is singular if and only if $Y_{x}\cap F=L+2L'$,
where $L'$ is a line. The curve $\Gamma_{L}$ is smooth for $L$
generic.\\
 The scheme $C_{L}$ is a reduced one dimensional subscheme of the
Fano surface $S$. There is a natural degree $2$ map $\mu:C_{L}\to\Gamma_{L}$
that is ramified precisely over the singularities of $\Gamma$. The
point $p$ of $C_{L}$ is smooth if and only if $\mu(p)$ is smooth.
\\
If $\Gamma_{L}$ is smooth, the Albanese variety of $S$ is isomorphic
to the Prym variety $Pr(C_{L}/\Gamma_{L})$.
\end{lem}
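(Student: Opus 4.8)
The curve $\Gamma_L=\{\det M_\Theta=0\}$ is a symmetric determinantal quintic, so the plan is to read off its local structure from the rank stratification of $M_\Theta$. Using $d(\det M_\Theta)=\mathrm{tr}(\mathrm{adj}(M_\Theta)\,dM_\Theta)$ together with the fact that $\mathrm{adj}(M_\Theta)$ vanishes exactly on the locus where $M_\Theta$ has rank $\le 1$, I would argue that a point $x\in\Gamma_L$ is singular precisely when $M_\Theta(x)$ has rank $1$: at rank-$2$ points the differential $d(\det M_\Theta)$ is nonzero (this is where smoothness of $F$ enters), while at rank-$1$ points $\mathrm{adj}(M_\Theta)$ vanishes and $x$ is singular. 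Since rank $1$ for the conic $Q_x$ means $Q_x=2L'$ is a double line, this yields the stated characterization of singular points. To see that such a singularity is a \emph{node}, I would diagonalize $M_\Theta(x)$ by congruence to $\mathrm{diag}(1,0,0)$ and expand: in local coordinates $(u,v)$ on $X$ the leading term of $\det M_\Theta$ is the determinant of the lower-right $2\times 2$ block, a quadratic form in $(u,v)$ whose nondegeneracy is again forced by the smoothness of $F$. Finally, the rank-$1$ locus of a $3\times 3$ symmetric matrix has expected codimension $3$, so inside the $2$-dimensional space $X=\mathbb{P}^{2}$ it is empty for generic $L$, giving smoothness of $\Gamma_L$.

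\textbf{The incidence curve and the double cover $\mu$.} The key observation is that any line $L''\neq L$ of $F$ meeting $L$ spans a plane $Y$ through $L$, so $L''$ must be a component of the residual conic $Q_x$; conversely each splitting $Q_x=L_1+L_2$ produces two lines meeting $L$. This both places $C_L$ over $\Gamma_L$ via $\mu(L'')=x$ and shows $\mu$ is generically $2:1$, the two sheets recording the two components of $Q_x$. Over a smooth point $Q_x=L_1+L_2$ has two distinct components (two points of $C_L$), whereas over a singular point $Q_x=2L'$ yields a single line, so $\mu$ is ramified exactly over $\mathrm{Sing}(\Gamma_L)$, and the local model ties smoothness of $p$ to smoothness of $\mu(p)$. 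Reducedness and purity of dimension $1$ for $C_L$ I would deduce from the fact that $C_L$ is an effective Cartier divisor on the smooth surface $S$, hence without embedded points, and generically reduced.

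\textbf{Identification of $A$ with $Pr(C_L/\Gamma_L)$.} Since the Albanese variety $A$ is already identified with $J(F)$, it suffices to realize $J(F)$ as a Prym variety via the conic bundle. Blowing up $L\subset F$ produces a smooth threefold $\tilde F$ whose projection off $L$ exhibits it as a conic bundle $\tilde F\to X=\mathbb{P}^{2}$ with fibres $Q_x$ and discriminant $\Gamma_L$. Because $L\cong\mathbb{P}^{1}$ satisfies $H^{1}(L)=0$, the blow-up formula gives $H^{3}(\tilde F)\cong H^{3}(F)$ and thus $J(\tilde F)\cong J(F)=A$. When $\Gamma_L$ is smooth, Beauville's theorem on intermediate Jacobians of conic bundles (\cite{Beauville}) identifies $J(\tilde F)$ with the Prym variety of the étale double cover parametrizing the components of the degenerate fibres; this cover is precisely $\mu:C_L\to\Gamma_L$, whence $A\cong Pr(C_L/\Gamma_L)$.

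\textbf{Main obstacle.} The deepest ingredient is the last step: Beauville's identification of the intermediate Jacobian of a conic bundle with a Prym variety rests on the full Prym machinery (and, here, on Murre's identification of $J(F)$ with the Albanese), so in practice I would cite it rather than reprove it. Among the more elementary parts the delicate point is the nodal analysis: showing that the quadratic leading term at a rank-$1$ point is nondegenerate genuinely uses the smoothness of $F$, and it is exactly this translation of \emph{$F$ smooth} into local nondegeneracy that is the technical heart of the arguments of Bombieri--Swinnerton-Dyer and Murre.
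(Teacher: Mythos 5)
The paper offers no proof of this lemma: it is imported wholesale from Bombieri--Swinnerton-Dyer and Murre (and, for the Prym identification, ultimately from Beauville's \emph{Vari\'et\'es de Prym et jacobiennes interm\'ediaires}, which in this paper's bibliography is \cite{Beauville2}, not \cite{Beauville}). Your sketch reconstructs the standard line of those references --- the rank stratification of the symmetric matrix $M_{\Theta}$, the identification of $C_{L}$ with the double cover splitting the degenerate fibres, and Beauville's conic-bundle theorem for the last assertion --- so there is nothing inside the paper to compare it against, and as a reconstruction of the cited proofs it is essentially on target.

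Two steps are, however, thinner than they need to be. First, the genericity argument for the smoothness of $\Gamma_{L}$: the rank-$\le 1$ locus of symmetric $3\times3$ matrices does have codimension $3$, but the map $x\mapsto M_{\Theta}(x)$ is far from a generic linear system (its entries have unequal degrees $1$, $2$ and $3$), so the expected-codimension count does not directly apply; Bombieri--Swinnerton-Dyer instead bound the locus of lines $L'$ admitting a plane section of the form $L+2L'$ by a dimension count on an incidence variety over the family of lines $L$, and this is what actually yields smoothness of $\Gamma_{L}$ for generic $L$. Second, reducedness of $C_{L}$ does not follow from its being an effective Cartier divisor on the smooth surface $S$: the divisor $2D$ is Cartier and nowhere reduced. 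One must check that the defining section vanishes to order one at a general point, e.g.\ via the identification of the tangent space of $S$ at $[L']$ with $H^{0}(N_{L'/F})$; this is precisely the content of \cite[Lem. 4]{Bombieri} and \cite[Pro. (1.25)]{MurreCompo1}. The remaining ingredients --- the use of $d(\det M)=\mathrm{tr}(\mathrm{adj}(M)\,dM)$ at rank-$2$ points, the nodal Hessian computation at rank-$1$ points with smoothness of $\tilde F$ as the nondegeneracy input, and the reduction of the Prym statement to Beauville's theorem via $H^{3}(\tilde F)\cong H^{3}(F)$ --- are the correct and standard route.
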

Let $\tilde{F}\to F$ be the blow-up of $F$ along a line $L$ on
$F$. The variety $\tilde{F}$ has a natural structure of a conic
bundle:
\[
\tilde{F}\to\mathbb{P}^{2}=X
\]
where $X$ parametrizes the planes containing the line $L$. The fiber
over the point $x$ is (isomorphic to) the quadric $Q_{x}$ such that
$F\cap Y_{x}=Q+L$. For $x$ defined over $k$, the number of $k$-points
of $Q_{x}$ is:\\
 i) $q+1$ if $Q_{x}$ is geometrically irreducible i.e. $x\not\in\Gamma_{L}(k)$
or if $x\in\Gamma_{L}(k)$ is a singular point of $\Gamma$, in that
case $Q_{x}=2L'$ with $L'$ a $k$-rational line on $F$.\\
 ii) $2q+1$ if $Q_{x}=L_{1}+L_{2}$ with $L_{1},L_{2}$ two different
lines defined over $k$, i.e. the $2$ points in $C_{L}$ over $x\in\Gamma_{L}(k)$
are in $C_{L}(k)$.\\
 iii) $1$ if the conic $Q_{x}$ degenerates to a union of two line
over a (degree $2$) extension of $k$, i.e. if the $2$ points in
$C_{L}$ over $x\in\Gamma_{L}(k)$ are not in $C_{L}(k)$.

Let $N_{r}(X)$ denotes the number of rational points of a variety
$X$ over the degree $r$ extension $k_{r}=\mathbb{F}_{q^{r}}$ of
$k$. The following Proposition is \cite[Formula (18)]{Bombieri},
however we reproduce the proof here because it explains the algorithm
computing $N_{r}(F)$ we describe below. 
\begin{prop}
\label{proposition number of points Bombieri}We have
\[
N_{r}(F)=q^{3r}+q^{2r}+q^{r}+1+q^{r}(N_{r}(C_{L})-N_{r}(\Gamma_{L})).
\]
\end{prop}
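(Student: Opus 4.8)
The plan is to count the $k_r$-rational points of $F$ fiber by fiber through the conic bundle structure $\tilde{F}\to X=\mathbb{P}^2$. The blow-up map $\tilde{F}\to F$ is an isomorphism away from the line $L$, and over $L$ it introduces an exceptional divisor; since counting points on $\tilde{F}$ is cleaner (it is a genuine fibration over $X$), I would first count $N_r(\tilde{F})$ and then correct for the blow-up. Concretely, $\tilde F$ is $F$ with $L\cong\mathbb{P}^1$ replaced by a $\mathbb{P}^1$-bundle over $L$, so $N_r(\tilde F)=N_r(F)-N_r(L)+N_r(\mathbb{P}^1\times L)=N_r(F)-(q^r+1)+(q^r+1)^2$. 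Alternatively one sums the fiber counts directly.

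The core computation is the sum $\sum_{x\in X(k_r)} N_r(Q_x)$ of points in the fibers, using the three-case analysis recorded just before the statement (now applied over $k_r$ rather than $k$). Each $x\in X(k_r)=\mathbb{P}^2(k_r)$, of which there are $q^{2r}+q^r+1$, falls into exactly one case. For a generic $x\notin\Gamma_L(k_r)$, or a singular point of $\Gamma_L$, the fiber contributes $q^r+1$. For $x\in\Gamma_L(k_r)$ a smooth point whose two preimages in $C_L$ are $k_r$-rational the fiber contributes $2q^r+1$, i.e.\ an excess of $q^r$ over the generic count; and when the two preimages are conjugate over $k_r$ the fiber contributes $1$, a deficit of $q^r$. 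First I would write the total as the baseline $(q^r+1)(q^{2r}+q^r+1)$ coming from assigning $q^r+1$ to every point of $X(k_r)$, and then add the corrections $\pm q^r$ at the points of $\Gamma_L(k_r)$.

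The key observation is that these corrections are exactly encoded by $N_r(C_L)-N_r(\Gamma_L)$. Indeed, over a smooth split point of $\Gamma_L$ there are two $k_r$-points of $C_L$ lying above one point of $\Gamma_L$, contributing $+1$ to $N_r(C_L)-N_r(\Gamma_L)$, matching the $+q^r$ excess; over a smooth inert point there are zero $k_r$-points of $C_L$ above one point of $\Gamma_L$, contributing $-1$, matching the $-q^r$ deficit; and over a singular (ramification) point the map $\mu:C_L\to\Gamma_L$ is one-to-one by the Lemma, contributing $0$, matching the fact that such points carry no correction. Summing over $\Gamma_L(k_r)$ gives $\sum_{x\in X(k_r)}N_r(Q_x)=(q^r+1)(q^{2r}+q^r+1)+q^r\bigl(N_r(C_L)-N_r(\Gamma_L)\bigr)$. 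Substituting into the blow-up relation and simplifying the baseline term to $q^{3r}+q^{2r}+q^r+1$ yields the claimed formula.

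The main obstacle is bookkeeping accuracy in the point-count correspondence at the distinguished points of $\Gamma_L$: one must verify that the ramification locus of $\mu$ coincides with the singular locus of $\Gamma_L$ (supplied by the Lemma), so that singular points genuinely contribute $0$ rather than being miscounted, and one must be careful that the three cases are exhaustive and mutually exclusive over each $k_r$. I would also double-check the blow-up correction, confirming that the exceptional divisor is a $\mathbb{P}^1$-bundle over $L$ so that the surplus from passing to $\tilde F$ cancels cleanly and leaves the geometric-series baseline $q^{3r}+q^{2r}+q^r+1$ intact.
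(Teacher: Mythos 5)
Your proposal is correct and follows essentially the same route as the paper: the three-case fiber count for the conic bundle $\tilde F\to\mathbb{P}^2$, the observation that singular points of $\Gamma_L$ (the ramification locus of $\mu$) contribute nothing to $N_r(C_L)-N_r(\Gamma_L)$ while split and inert smooth points contribute $\pm1$ matching the $\pm q^r$ fiber corrections, and the blow-up relation $N_r(\tilde F)=N_r(F)-(q^r+1)+(q^r+1)^2$. The only difference is presentational — you organize the sum as baseline plus corrections, whereas the paper sums directly over the strata using the count $a$ of rational singularities — and both yield the identical intermediate identity $N_r(\tilde F)=q^{3r}+2q^{2r}+2q^{r}+1+q^{r}(N_r(C_L)-N_r(\Gamma_L))$.
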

\begin{proof}
Let $a$ be the number of $\mathbb{F}_{q}$-rational singularities
of $\Gamma_{L}$ (and $C_{L}$). Taking care of the three above possibilities
i), ii) and iii), we get:
\[
\begin{array}{cc}
N_{1}(\tilde{F})= & (q+1)(N_{1}(\mathbb{P}^{2})-N_{1}(\Gamma_{L})+a)+(2q+1)\frac{1}{2}(N_{1}(C_{L})-a)\\
 & +(N_{1}(\Gamma_{L})-a-\frac{1}{2}(N_{1}(C_{L})-a))
\end{array}
\]
 thus
\[
N_{1}(\tilde{F})=q^{3}+2q^{2}+2q+1+q(N_{1}(C_{L})-N_{1}(\Gamma_{L})).
\]
 As each point on $L\hookrightarrow F$ is replaced by a $\mathbb{P}^{1}$
on $\tilde{F}$, we have moreover:
\[
N_{1}(\tilde{F})=N_{1}(F)-(q+1)+(q+1)^{2},
\]
 thus $N_{1}(F)=q^{3}+q^{2}+q+1+q(N_{1}(C_{L})-N_{1}(\Gamma_{L})).$
\end{proof}
Let $\mu:C_{L}\to\Gamma_{L}$ be the degree $2$ map and let $x$
be a $k$-rational smooth point on $\Gamma$. In order to compute
the numbers $N_{r}(C_{L})-N_{r}(\Gamma_{L})$, we need to understand
when the two $\bar{k}$-rational points in $\mu^{-1}x$ are $k$-rational.
For $1\leq i\leq3$, let $\delta_{i}\in H^{0}(\Gamma_{L},\mathcal{O}(a)),\, a=2\mbox{ or }4$
be the $(i,i)$-minor of the matrix $M_{\Theta}$. 
\begin{prop}
\label{pro:For-every-smooth delta}Let $x$ be smooth point of $\Gamma$.
There exists an integer $1\leq i=i(x)\leq3$ such that $\delta_{i}(x)\not=0$.
The curve $C_{L}$ has two rational points over $x\in\Gamma_{L}(k)$
if and only if $-\delta_{i}(x)\in(k^{*})^{2}.$\end{prop}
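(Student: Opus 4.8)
The plan is to translate the splitting of the two points of $C_L$ above $x$ into the splitting of the degenerate conic $Q_x$, and then to read that splitting off from the principal $2\times 2$ minors of $M_\Theta$. Recall that $M_\Theta$ is, up to a permutation of the coordinates $y_1,y_2,y_3$, the symmetric matrix of the quadratic form defining $Q_x$ in the plane $Y_x$. By the previous Lemma, $x$ is a smooth point of $\Gamma_L$ exactly when $M_\Theta(x)$ has rank $2$, i.e. when $Q_x=L_1+L_2$ is a union of two distinct lines; the two points of $\mu^{-1}(x)$ are precisely $L_1$ and $L_2$, and both are $k$-rational if and only if the two lines are defined over $k$. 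Since $x$ is a $k$-point, the plane $Y_x$ is defined over $k$, so this happens if and only if the rank-$2$ form $Q_x$ factors into two linear forms over $k$.

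First I would record the structure of a rank-$2$ symmetric matrix. The kernel of $M_\Theta(x)$ is a line defined over $k$, spanned by a $k$-rational vector $v_0$ (geometrically $v_0$ is the vertex $L_1\cap L_2$, which is Galois-stable even when the two lines are conjugate). Choosing a $k$-rational coordinate change carrying $v_0$ to the coordinate point $e_3$, the form $Q_x$ becomes a binary quadratic form $Au^2+2Buv+Cv^2$ in the remaining two variables, and this form factors over $k$ if and only if its discriminant $B^2-AC=-(AC-B^2)$ is a square in $k^{*}$. Thus, setting $c:=AC-B^{2}$, the two lines are $k$-rational if and only if $-c\in(k^{*})^{2}$.

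The remaining step, and the technical heart, is to identify the class of $c$ in $k^{*}/(k^{*})^{2}$ with that of $\delta_i(x)$. I would use the adjugate matrix $\mathrm{adj}\,M_\Theta$: since $M_\Theta(x)$ has rank $2$, its adjugate is symmetric of rank $1$, and as its columns lie in $\ker M_\Theta(x)=\langle v_0\rangle$ it must have the form $\mathrm{adj}\,M_\Theta(x)=c'\,v_0 v_0^{T}$ for a scalar $c'\in k^{*}$. In the coordinate system above one computes directly that $c'=AC-B^{2}=c$, and under a $k$-rational change of basis given by $P\in\mathrm{GL}_3(k)$ one has $M_\Theta\mapsto P^{T}M_\Theta P$, hence $c'\mapsto(\det P)^{2}c'$, so the class of $c'$ in $k^{*}/(k^{*})^{2}$ is well defined and equals that of $c$. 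The diagonal entries of the adjugate are exactly the principal $2\times 2$ minors, so $\delta_i(x)=(\mathrm{adj}\,M_\Theta(x))_{ii}=c\,(v_{0,i})^{2}$. This yields at once the existence of an index $i$ with $\delta_i(x)\neq 0$ (some coordinate of $v_0$ is nonzero and $c\neq 0$), the independence of the class $\delta_i(x)\bmod(k^{*})^{2}$ from the chosen index, and the equivalence $-\delta_i(x)\in(k^{*})^{2}\Leftrightarrow -c\in(k^{*})^{2}$. Combining this with the previous paragraph gives the claim.

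The hard part will be the bookkeeping that makes the statement genuinely well posed: one must check both that the scalar $c'$ is altered only by squares under coordinate changes (so that $-\delta_i(x)\bmod(k^{*})^{2}$ is intrinsic) and that the geometric factorization of $Q_x$ over $k$ really corresponds to the $k$-rationality of the two points of $C_L$ above $x$, for which the crucial inputs are the $k$-rationality of the vertex $v_0$ and of the plane $Y_x$. Everything else is the routine discriminant computation for a binary quadratic form.
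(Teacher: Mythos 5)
Your proof is correct, but it follows a genuinely different route from the paper's. The paper restricts the degenerate conic $Q_{x}$ to the line $L=\{y_{1}=0\}$ itself: the two points of $L\cap Q_{x}$ are cut out by the binary form $\ell_{1}y_{2}^{2}+2\ell_{2}y_{2}y_{3}+\ell_{3}y_{3}^{2}$, whose discriminant is $-\delta_{3}(x)$, and when $\delta_{3}(x)\neq0$ these two points are in bijection with the lines $L_{1},L_{2}$, giving the criterion directly. When $\delta_{3}(x)=0$ (i.e.\ the vertex $L_{1}\cap L_{2}$ lies on $L$) this restriction degenerates, and the paper only sketches that case, passing to a model of the degree~$6$ scheme $\mathcal{X}_{p}$ of lines through the $k$-point $p=L_{1}\cap L_{2}$ and citing Beauville. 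Your adjugate argument replaces this case split by a single uniform computation: $\mathrm{adj}\,M_{\Theta}(x)=c\,v_{0}v_{0}^{T}$ with $v_{0}$ spanning the kernel (the vertex) and $c\in k^{*}$, so $\delta_{i}(x)=c\,v_{0,i}^{2}$, which simultaneously gives the existence of a nonvanishing $\delta_{i}$, the independence of the class of $-\delta_{i}(x)$ in $k^{*}/(k^{*})^{2}$ from $i$, and the identification of that class with the discriminant of the rank-$2$ form. Your condition $\delta_{3}(x)=0\Leftrightarrow v_{0,3}=0\Leftrightarrow L_{1}\cap L_{2}\in L$ recovers exactly the paper's dichotomy, but your treatment actually proves the second case rather than sketching it, at the cost of the (routine) bookkeeping you flag about the scalar $c$ being well defined modulo squares. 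Both approaches rely on the same geometric inputs from the preceding Lemma: that smoothness of $x$ on $\Gamma_{L}$ means $Q_{x}=L_{1}+L_{2}$ with $L_{1}\neq L_{2}$, and that the fibre $\mu^{-1}(x)$ consists precisely of these two lines.
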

\begin{proof}
Let $x$ a $k$-rational smooth point of $\Gamma$ and let $Q=Q_{x}$
such that $F\cap Y_{x}=L+Q$. The line $L=\{y_{1}=0\}$ meets $Q_{x}\hookrightarrow Y_{x}$
in the points such that: 
\[
y_{1}=\ell_{1}y_{2}^{2}+2\ell_{2}y_{2}y_{3}+\ell_{3}y_{3}^{2}=0.
\]
Therefore, if $-\delta_{3}(x)=\ell_{2}^{2}(x)-\ell_{1}(x)\ell_{2}(x)$
is nonzero, the curve $C_{L}$ has two rational points over $x\in\Gamma_{L}(k)$
if and only if $-\delta_{3}(x)\in(k^{*})^{2}$. \\
 For $\delta_{3}(x)=0$, we only sketch the proof ; see also \cite[Lemme 1.6]{Beauville2}
and its proof. In that case, we have $Y_{x}\cdot F=L+Q_{x}=L+L_{1}+L_{2}$
with $L_{1},L_{2}$ defined over $\bar{k}$ and meeting in a $k$-rational
point $p$ of $L$. It is possible to explicitly compute a model in
$\mathbb{P}^{3}$ of (the degree $6$) scheme $\mathcal{X}_{p}$ of
lines in the cubic $F$ going through $p$. By knowing $\mathcal{X}_{p}$,
we can determine whether the two points on $\mathcal{X}_{p}$ corresponding
to the lines $L_{1},L_{2}$ are $k$-rational or not and this is so
if and only if $-\delta_{1}$ is a nonzero square or $-\delta_{2}$
is a nonzero square. 
\end{proof}
Let us describe the algorithm for the computation of the numbers $N_{r}(F)$
and $N_{r}(C_{L})-N_{r}(\Gamma_{L})$.

The input data is a cubic threefold $F$ over $\mathbb{F}_{q}$ containing
a $\mathbb{F}_{q}$-rational line $L$. To this data is associated
the matrix $M_{\Theta}$ defined above whose determinant is the equation
of the quintic $\Gamma_{L}\hookrightarrow X=\mathbb{P}^{2}$ (maybe
singular). Then we compute $N_{r}=N_{r}(C_{L})-N_{r}(\Gamma_{L})$
as follows:

Initiate $N_{r}:=0$. For $x\in\mathbb{P}^{2}(\mathbb{F}_{q^{r}})$,
if $\det(M_{\Theta})(x)=0$ then if $-\delta_{3}(x)\in(\mathbb{F}_{q^{r}}^{*})^{2}$
then $N_{r}:=N_{r}+1$, else if $-\delta_{3}(x)\not=0$, then $N_{r}:=N_{r}-1$,
otherwise if $-\delta_{1}(x)\in(\mathbb{F}_{q^{r}}^{*})^{2}$ then
$N_{r}:=N_{r}+1$ else if $-\delta_{1}(x)\not=0$, then $N_{r}:=N_{r}-1$,
otherwise if $-\delta_{2}(x)\in(\mathbb{F}_{q^{r}}^{*})^{2}$ then
$N_{r}:=N_{r}+1$ else if $-\delta_{2}(x)\not=0$ then $N_{r}:=N_{r}-1$
end if, end for.

The output $N_{r}$ equals $N_{r}(C_{L})-N_{r}(\Gamma_{L})$. Remark
that the $-\delta_{i}$ are transition functions of an invertible
sheaf $\mathcal{L}$ on $\Gamma_{L}$ such that $\mathcal{L}^{\otimes2}=\omega_{\Gamma}$.
The data of $\mathcal{L}$ corresponds to the degree $2$ cover $\mu:C_{L}\to\Gamma_{L}$
and a point on $\Gamma_{L}$ is singular if and only if $\forall1\leq i\leq3,\,\delta_{i}(x)=0$.

The knowledge of $N_{1},\dots,N_{5}$ is enough to get the $5$ first
coefficients of the degree $10$ polynomial $P_{1}(Pr(C_{L}/\Gamma_{L}),T)\in\mathbb{Z}[T]$
and the remaining $5$ ones are determined by the symmetries of $P_{1}$. 
\begin{rem}
This algorithm for computing the action of the Frobenius on the Prym
variety $Pr(C_{L}/\Gamma_{L})$ is generalizable to other plane curves
occurring as discriminant locus of other quadric bundles, see e.g.
\cite{Beauville2}. 
\end{rem}

\section{Examples}

\label{section examples}

\subsection{Reduction in characteristic $5$ and $7$ of a cubic threefold.}

\label{Examples}

Let $F\hookrightarrow\mathbb{P}^{4}$ be the cubic threefold with
equation:
\[
F_{eq}=x_{1}x_{4}^{2}+2x_{2}x_{4}x_{5}+x_{3}x_{5}^{2}+2q_{1}x_{4}+2q_{2}x_{5}+f,
\]
 where: 
\[
\begin{array}{c}
q_{1}=x_{1}^{2}+2x_{2}^{2}+x_{2}x_{3}+x_{3}^{2}\\
q_{2}=x_{1}x_{2}+4x_{2}x_{3}+x_{3}^{2}\\
f=x_{2}^{2}x_{3}-(x_{1}^{3}+4x_{1}x_{2}^{2}+2x_{2}^{3}).
\end{array}
\]
 The cubic $F$ is smooth in characteristic $5,7,11$ and $13$ ;
it is singular in characteristic $2,3$. The associated quintic curve
$\Gamma=\Gamma_{L}$ is smooth in characteristic $3,7,11,13$, but
singular in characteristic $2,5$. 
\begin{rem}
The cubic $F_{/\mathbb{Q}}$ and its Fano surface $S_{/\mathbb{Q}}$
have bad reduction at the place $3$, however the intermediate Jacobian
$J(F)\simeq Pr(C_{L}/\Gamma_{L})$ has good reduction. This is the
same phenomena as for the curves and their Jacobian. We remark also
that the curves $\Gamma_{L}$ and $C_{L}$ both have bad reduction
at the place $5$, but the associated Prym variety has good reduction. 
\end{rem}
We have implemented the algorithm in Sage. Using a personal laptop,
it takes $5$ minutes to obtain $P_{1}(S,T)$ for $S$ over $\mathbb{F}_{7}$.

Over $\mathbb{F}_{5}$, we get (see \cite{RoulleauWorksheet}) :
\[
P_{1}(S_{/\mathbb{F}_{5}},T)=(5T^{2}+1)(625T^{8}+50T^{6}+40T^{5}-6T^{4}+8T^{3}+2T^{2}+1).
\]
 The Fano surface $S_{/\mathbb{F}_{5}}$ has Picard number $5$ and
contains $33$ $\mathbb{F}_{3}$-rational points. We have:
\[
A_{5}:=\lim_{s\to1}\frac{P_{2}(S_{/\mathbb{F}_{5}},5^{-s})}{(1-5^{1-s})^{5}}=\frac{2^{18}\cdot3^{5}\cdot157}{5^{10}}.
\]

Over $\mathbb{F}_{7}$, we get:
\[
\begin{array}{cc}
P_{1}(S_{/\mathbb{F}_{7}},T)= & 1+4T+15T^{2}+46T^{3}+159T^{4}+460T^{5}+1113T^{6}\\
 & +2254T^{7}+5145T^{8}+9604T^{9}+16807T^{10}.
\end{array}
\]
 It is an irreducible polynomial over $\mathbb{Q}$, therefore by
the Honda-Tate Theorem \cite[Theorems 2-3, App. I]{MumfordAV}, the
intermediate jacobian $J(F)$ of $F$ is simple. The Fano surface
$S_{/\mathbb{F}_{7}}$ has Picard number $5$ and $97$ $\mathbb{F}_{7}$-rational
points. We obtain:
\[
A_{7}:=\lim_{s\to1}\frac{P_{2}(S_{/\mathbb{F}_{7}},7^{-s})}{(1-7^{1-s})^{5}}=\frac{2^{4}\cdot83^{2}\cdot557\cdot5737}{7^{10}}.
\]

\begin{rem}
We prove in \cite{Roulleau} that a generic Fano surface over $\mathbb{C}$
has Picard number $\rho=1$. One would like to exhibit an example
of a Fano surface over $\mathbb{Q}$ with $\rho=1$. By reducing the
above Fano surface $S_{/\mathbb{Q}}$ modulo a prime we obtain the
bound $\rho_{S}\leq5$. Since $A_{5}/A_{7}$ is not a square in $\mathbb{Q}$,
we can apply the van Luijk method \cite{Luijk} and obtain the inequality
$\rho_{S}\leq4$.
\end{rem}
Over the field $\mathbb{F}_{11}$, the computation becomes difficult
: it needs $4$ minutes to get $N_{4}$ but more that $24$ hours
to get $N_{5}$. By \cite[Lem. 1.2.3]{Kedlaya}, for positive integers
$q,d,j$ and complex numbers $a_{1},\dots,a_{j-1}$, there exists
a certain disk of radius $\frac{d}{j}q^{j/2}$ which contains every
$a_{j}$ for which we can choose $a_{j+1},\dots,a_{d}\in\mathbb{C}$
so that the polynomial
\[
R(T)=1+\sum_{j=1}^{j=d}a_{j}T^{j}
\]
has all roots on the circle $|T|=q^{-1/2}$. In our case, $\frac{d}{j}q^{j/2}=\frac{10}{5}11^{5/2}=802.623...$
and we obtain that $P_{1}(S_{/\mathbb{F}_{11}},T)=Q_{a}$, where $a$
is an integer in $\{80,\dots,332\}$ and
\[
\begin{array}{cc}
Q_{a}= & 1-T+13T^{2}+T^{3}-28T^{4}+aT^{5}-11\cdot28T^{6}\\
 & +11^{2}T^{7}+11^{3}\cdot13T^{8}-11^{4}T^{9}+11^{5}T^{10}.
\end{array}
\]
 For all the consecutive values $a\in\{80,\dots,332\}$, the polynomial
$Q_{a}$ has its roots equal to $11^{-1/2}$ (with error at most $10^{-10}$)
and we cannot distinguish the $a$ corresponding to our Fano surface
$S$.

\subsection{The Klein cubic threefold.}

Let $S_{/\mathbb{Q}}$ be the Fano surface of lines of the Klein cubic
threefold : 
\[
F_{/\mathbb{Q}}=\{x_{1}^{2}x_{2}+x_{2}^{2}x_{3}+x_{3}^{2}x_{4}+x_{4}^{2}x_{5}+x_{5}^{2}x_{1}=0\}.
\]
 It is easy to check that $F$ (hence $S$) has good reduction at
every prime $p\not=11$.
\begin{prop}
Let us suppose $p\not=2$. If $11$ is not a square modulo $p$, then
$S_{p}$ the reduction mod $p$ is supersingular i.e. its geometric
Picard number equals $45=b_{2}$, otherwise $S_{p}$ has geometric
Picard number $25$.\end{prop}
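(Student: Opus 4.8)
The plan is to push everything onto the Albanese variety $A=J(F)$ and then exploit the exceptional symmetry of the Klein cubic. Since $H^{2}(\bar S,\QQ_\ell)\simeq\wedge^{2}H^{1}(\bar A,\QQ_\ell)$ and the Tate conjecture holds for Fano surfaces by \cite{RoulleauTate}, the geometric Picard number of $S_p$ equals the number of pairs $1\le i<j\le 10$ for which $\omega_i\omega_j$ is $p$ times a root of unity, where $\omega_1,\dots,\omega_{10}$ are the reciprocal roots of $P_1(A_p,T)$; moreover $b_2=\dim\wedge^2H^1=45$. So the whole task is to pin down the multiset $\{\omega_i\}$ up to roots of unity over $\bar{\FF}_p$, and to see that either all $45$ products or exactly $25$ of them are roots-of-unity multiples of $p$.

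First I would exhibit the order-$11$ symmetry. The substitution $x_i\mapsto\zeta_{11}^{(-2)^{i}}x_i$ fixes the Klein equation (one checks $2(-2)^{i}+(-2)^{i+1}\equiv0$ and $(-2)^{5}\equiv1\bmod 11$), so $\ZZ[\zeta_{11}]\hookrightarrow\End(A)$; as $[\QQ(\zeta_{11}):\QQ]=10=2\dim A$, the variety $A$ has complex multiplication by $K=\QQ(\zeta_{11})$. The crucial step is then to compute the CM type, i.e. the characters of $\zeta_{11}$ occurring in $H^{2,1}(F)$, via the Jacobian-ring description of the Hodge structure of the cubic. I would show this type is the one \emph{induced} from the imaginary quadratic subfield $\QQ(\sqrt{-11})\subset\QQ(\zeta_{11})$ (equivalently, complex conjugation interchanges the characters of $H^{2,1}$ with their complement through $(\ZZ/11)^{*}/(\text{squares})$). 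Consequently $A$ is isogenous to $E^{5}$, where $E$ is an elliptic curve with CM by $\QQ(\sqrt{-11})$, which reduces the five-dimensional problem to a single CM elliptic curve.

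With $A_p\sim E_p^{5}$ the eigenvalue bookkeeping is elementary. If $E_p$ is supersingular then $\pi_E^{2}=-p$, so the reciprocal roots are $\pm\sqrt{-p}$, each of multiplicity five; every product $\omega_i\omega_j$ lies in $\{\pm p\}$, all $45$ are roots-of-unity multiples of $p$, and $S_p$ is supersingular with geometric Picard number $45=b_2$. If $E_p$ is ordinary, its Frobenius $\pi_E\in\QQ(\sqrt{-11})$ satisfies $\pi_E\bar\pi_E=p$ with $\pi_E\neq\bar\pi_E$, the reciprocal roots are $\pi_E$ and $\bar\pi_E$ each with multiplicity five, and the products are $\pi_E^{2}$, $\bar\pi_E^{2}$ and $p$. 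Only the $25$ products equal to $\pi_E\bar\pi_E=p$ qualify: the quotient $\pi_E/\bar\pi_E$ is a unit of absolute value $1$ in $\QQ(\sqrt{-11})$ that is not $\pm1$ (the only roots of unity there), so $\pi_E^{2},\bar\pi_E^{2}\notin p\cdot\mu_\infty$. Hence the geometric Picard number is exactly $25$.

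It remains to translate ``supersingular'' into the congruence condition. The curve $E$ has supersingular reduction at $p$ precisely when $p$ is inert in $\QQ(\sqrt{-11})$, i.e. when $-11$ is not a square modulo $p$; a quadratic-reciprocity computation converts this into the criterion on $11$ being a (non)square modulo $p$ stated in the Proposition, the primes $p\neq 2,11$ being exactly those of good reduction. I expect the \textbf{main obstacle} to be the second step: rigorously identifying the CM type of $A$ and thereby establishing the isogeny $A\sim E^{5}$ (equivalently, that the $\psl(2,\FF_{11})$-symmetry forces the induced type). The Hodge-theoretic computation of the $\zeta_{11}$-action on $H^{2,1}(F)$, together with the verification that the resulting type descends from $\QQ(\sqrt{-11})$, is where the real work lies; once it is in place, the eigenvalue counts and the reciprocity bookkeeping are routine.
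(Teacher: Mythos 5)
Your route is essentially the paper's: both arguments reduce the Proposition to the isogeny $J(F)\sim E^{5}$ with $E$ an elliptic curve with complex multiplication by $\QQ(\sqrt{-11})$, and then read off the geometric Picard number of the reduction from the ordinary/supersingular dichotomy for $E_{p}$, governed by the splitting behaviour of $p$ in $\QQ(\sqrt{-11})$ (Deuring's criterion). Your Frobenius-eigenvalue count ($45$ versus $25$ products $\omega_{i}\omega_{j}\in p\cdot\mu_{\infty}$) is a correct substitute for the paper's appeal to \cite{MumfordAV} for the N\'eron--Severi rank of $E_{p}^{5}$, and it buys nothing extra. The one genuine gap is exactly the step you flag yourself as the ``main obstacle'': you never establish $A\sim E^{5}$. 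The paper does not prove it either --- it simply cites \cite{RoulleauKlein}, where $J(F)_{/\CC}\simeq E^{5}$ is proved. Your sketched plan (order-$11$ automorphism, CM by $\QQ(\zeta_{11})$, CM type induced from $\QQ(\sqrt{-11})$ via the Jacobian ring) is a plausible way to reprove that fact, but as written it is a promissory note rather than an argument; either carry out the computation of the $\zeta_{11}$-eigenvalues on $H^{2,1}(F)$ or cite the reference.

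A second, smaller point: your last step defers ``a quadratic-reciprocity computation'' converting the correct criterion ``$-11$ is not a square mod $p$'' (equivalently, $p$ inert in $\QQ(\sqrt{-11})$) into the Proposition's criterion ``$11$ is not a square mod $p$''. If you actually perform it you get $\left(\tfrac{-11}{p}\right)=\left(\tfrac{p}{11}\right)$, which differs from $\left(\tfrac{11}{p}\right)$ by the factor $\left(\tfrac{-1}{p}\right)$; for instance $11$ is a square mod $7$ while $-11$ is not. The paper's own proof makes the same unexamined identification of the two conditions, so you have faithfully reproduced its weak point, but the step as you describe it does not close: the clean form of the dichotomy is in terms of $\left(\tfrac{p}{11}\right)$ (equivalently $\left(\tfrac{-11}{p}\right)$), not $\left(\tfrac{11}{p}\right)$.
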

\begin{proof}
Let be $\nu=\frac{-1+\sqrt{-11}}{2}$, $\mathcal{O}=\mathbb{Z}[\nu]$
and $E=\mathbb{C}/\mathbb{Z}[\nu]$. The intermediate jacobian $J(F)_{/\mathbb{C}}$
is isomorphic to $E^{5}$ (see \cite{RoulleauKlein}). By \cite[App. A3]{Silverman},
the elliptic curve $E$ has the following model over $\mathbb{Q}$:
\[
y^{2}+y=x^{3}-x^{2}-7x+10,
\]
 which we still denote by $E$. The curve $E$ has good reduction
for prime $p\not=11$ and it has complex multiplication by $\mathcal{O}$
(over a certain extension). We use the criteria of Deuring \cite[Chap. 13, Thm 12)]{Lang}
: for odd $p\not=11$, the reduction of $E$ modulo $p$ is a supersingular
if and only if $p$ is inert or ramified in $\mathcal{O}$. By classical
results on number theory, an odd prime $p\not=11$ is inert or ramified
in $\mathcal{O}$ if and only if $11$ is not a square modulo $p$.

Over an extension, the intermediate Jacobian $J(F)$ is isogenous
to $E^{5}$. By \cite{MumfordAV}, the geometric Picard number of
the reduction modulo $p$ of $J(F)$ is therefore $45$ if $11$ is
not a square modulo $p$, and $25$ otherwise. 
\end{proof}
\selectlanguage{french}%

\noindent Xavier Roulleau,\\
Universit\'e de Poitiers,\\
Laboratoire de Math\'ematiques et Applications, UMR 7348 du CNRS,\\
 Boulevard Pierre et Marie Curie,\\
T\'el\'eport 2 - BP 30179,\\
86962 Futuroscope Chasseneuil,\\
France\\
{\tt Xavier.Roulleau@math.univ-poitiers.fr}\\ \selectlanguage{english}%

\end{document}